\documentclass{amsart}
\pdfoutput=1

\usepackage{amssymb} 
\usepackage{amsmath} 
\usepackage{amscd}
\usepackage{amsbsy}
\usepackage{comment}
\usepackage[matrix,arrow]{xy}
\usepackage{url}
\usepackage{hyperref}

\newtheorem{theorem}{Theorem}[section]
\newtheorem{lemma}[theorem]{Lemma}
\newtheorem{corollary}[theorem]{Corollary}

\newtheorem{dfn}{Definition}
\usepackage{epsfig}

\usepackage{enumerate}
\def\mod#1{{\ifmmode\text{\rm\ (mod~$#1$)}
\else\discretionary{}{}{\hbox{ }}\rm(mod~$#1$)\fi}}

\begin{document}
\title{Squares with three nonzero digits}

\author{Michael A. Bennett}
\address{Department of Mathematics, University of British Columbia, Vancouver BC}
\email{bennett@math.ubc.edu}
\thanks{The authors were supported in part by  grants from NSERC}

\author{Adrian-Maria Scheerer}
\address{Department for Analysis and Computational Number Theory, Graz University of Technology, Graz, Austria}
\email{scheerer@math.tugraz.at}
\thanks{The second author was supported by the Austrian Science Fund (FWF): I 1751-N26; W1230, Doctoral Program ``Discrete Mathematics''; and  SFB F 5510-N26}

\subjclass{Primary 11D61, Secondary 11A63, 11J25}

\date{\today}
\keywords{}
\begin{abstract}
We determine all integers $n$ such that $n^2$ has at most three base-$q$ digits for $q \in \{ 2, 3, 4, 5, 8, 16 \}$. More generally, we show that all solutions to  equations of the shape
$$
Y^2 = t^2 + M \cdot q^m + N \cdot q^n,
$$
where $q$ is an odd prime, $n > m > 0$ and $t^2, |M|, N < q$, either arise from ``obvious'' polynomial families or satisfy $m \leq 3$.
Our arguments rely upon Pad\'e approximants to the binomial function, considered $q$-adically.
\end{abstract}

\maketitle

\section{Introduction}

Let us suppose that $q > 1$ is an integer. A common way to measure the lacunarity of the base-$q$ expansion of a positive integer $n$ is through the study of functions we will denote by $N_q(n)$ and $S_q(n)$, the number of and sum of the nonzero digits in the  base-$q$ expansion of $n$, respectively. Our rough expectation is that, if we restrict $n$ to lie in a subset  $S \subset \mathbb{N}$, these quantities should behave in essentially the same way as for unrestricted integers, at least provided the subset is not too ``thin''. Actually quantifying such a statement can be remarkably difficult; particularly striking successes along these lines, for $S$ the sets of primes and squares can be found in work of Mauduit and Rivat \cite{MR1} and \cite{MR2}.

In this paper, we will restrict our attention to the case where $S$ is the set of integer squares. Since (see \cite{De})
$$
\sum_{n < N} S_q(n) \sim \frac{1}{2} \sum_{n < N} S_q(n^2) \sim \frac{q-1}{2 \log q} N \log N,
$$
it follows that the ratios 
$$
\frac{S_q(n^2)}{S_q(n)} \; \; \mbox{ and } \; \; \frac{N_q(n^2)}{N_q(n)}
$$
are infrequently ``small''. On the other hand, in the case $q=2$ (where $S_q(n)$ and $N_q(n)$ coincide), Stolarsky \cite{Sto} proved that, for infinitely many $n$,
$$
\frac{N_2(n^2)}{N_2(n)} \leq \frac{4 \left( \log \log n \right)^2}{\log n},
$$
a result that was subsequently substantially sharpened and generalized by Hare, Laishram and Stoll \cite{HLS1}. Further developments are well described in \cite{HLS2}  where, in particular, one finds that
$$
\# \left\{ n < N  \; : \; N_2(n) = N_2(n^2) \right\} \gg N^{1/19}
$$
and that the set
$$
\left\{ n \in \mathbb{N}, \, n \mbox{ odd }  \; : \; N_2(n) = N_2(n^2)=k \right\}
$$
is  finite for $k \leq 8$ and  infinite for $k \in \{ 12, 13 \}$ or $k \geq 16$.

In what follows, we will focus our attention on  integers $n$ with the property that $N_q(n^2)=k$, for small fixed positive integer $k$. Classifying those integers $n$ in the set
$$
B_k (q) = \left\{ n \in \mathbb{N} \; : \; n \not\equiv 0 \mod{q} \; \mbox{ and } \; N_q(n) \geq N_q(n^2)=k \right\}
$$
is, apparently, a rather hard problem, even for the case $k=3$ (on some level,  this is the smallest ``nontrivial'' situation as those $n$ with $N_q(n^2) < 3$ are readily understood). There are infinitely many squares, coprime to $q$ with precisely three nonzero digits base-$q$, as evidenced by the identity
\begin{equation} \label{lumpy}
\left( 1 + q^b \right)^2 = 1 + 2 \cdot q^b + q^{2b}.
\end{equation}
There are, however, other squares with three nonzero digits, arising more subtly. For example, if $n=10837$, then, base $q=8$, we have
$$
10837 = 2 \cdot 8^4 + 5 \cdot 8^3 + 1 \cdot 8^2 + 2 \cdot 8 +5
$$
while
$$
10837^2 = 7 \cdot 8^8 + 7 \cdot 8 +1.
$$
On the other hand, a result of Corvaja and Zannier \cite{CoZa00} implies that
all but finitely many squares with three base-$q$ digits arise 
from polynomial identities like (\ref{lumpy}), and, further, that  $B_3(q)$ is actually finite. The proof of this in \cite{CoZa00}, however, depends upon Schmidt's Subspace Theorem and is thus ineffective (in that it does not allow one to precisely determine $B_3(q)$ -- it does, however, lead to an algorithmic determination of all relevant polynomial identities, if any). Analogous questions for $B_k(q)$ with $k \geq 4$ are, as far as we are aware, unsettled, except for the case of $B_4(2)$ (see \cite{CZ11}).

In this paper, we will explicitly determine $B_3 (q)$ for certain fixed values of $q$. We prove the following theorem.
\begin{theorem} \label{Cor1}
The only positive integers $n$ for which $n^2$ has at most three nonzero digits base $q$ for $q \in \{ 2, 3, 4, 5, 8, 16 \}$  and $n \not\equiv 0 \mod{q}$ are as follows :
$$
q=2 \; : \; n \in \{ 1, 5, 7, 23 \} \mbox{ or } n= 2^b+1, 
$$
$$
q=3 \; : \; n \in \{ 1, 5, 8, 13 \} \mbox{ or } n=  3^b+1, 
$$
$$
q=4 \; : \; n=t \mbox{ or } 2t \mbox{ for } t \in \{ 1, 7, 15,  23,  31, 111 \}, \mbox{ or } t= 4^b+1 \mbox{ or } 2 \cdot 4^b+1, 
$$
$$
q=5 \; : \; n \in \{ 1, 4, 8, 9, 12, 16, 23, 24, 56, 177 \} \mbox{ or } n= 5^b+1, 2 \cdot 5^b+1 \mbox{ or }  5^b + 2, 
$$
$$
\begin{array}{c}
q=8 \; : \; n \leq 63, \; n \in \{ 92, 111, 124, 126,  158, 188, 316, 444, 479, 508, 10837 \} \\
\mbox{ or } n = r \cdot 8^b+s \mbox{ for } r, s \in \{ 1, 2, 4 \} 
\end{array}
$$
and
$$
\begin{array}{c}
q=16 \; : \; n=t, 2t \mbox{ or } 4t \mbox{ for } t  \leq 100, \;  t \in \{  111, 125, 126, 127 \} \\
\mbox{ or } t= r \cdot 16^b+s  \mbox{ where either  } r, s \in \{ 1, 2, 4, 8 \} \mbox{ or the set } \\
\{ r,s \} \mbox{ is one of }  \{ 1,3 \}, \{ 2, 3 \}, \{ 3, 8 \}, \{ 2, 12 \}, \{ 4, 12 \} \mbox{ or } \{ 8, 12 \}. \\
\end{array}
$$
Here, $b$ is a nonnegative integer. 
\end{theorem}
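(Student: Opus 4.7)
\emph{Outline of the proof.} The argument proceeds in three stages: reduction of the digit-counting problem to a finite list of exponential Diophantine equations, application of the $q$-adic Pad\'e bound advertised in the abstract, and a bounded case analysis.

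Suppose $n \in B_3(q)$. If $N_q(n^2) \le 2$ one obtains by elementary manipulation the parametric families $n = r \cdot q^b + s$ appearing in the statement, together with a short finite list of small exceptions. Assume then that $N_q(n^2) = 3$ and write
$$n^2 = d_0 + d_1 q^m + d_2 q^\ell, \qquad 0 < m < \ell, \quad d_0, d_1, d_2 \in \{1, \ldots, q-1\}.$$
The hypothesis $q \nmid n$ forces $d_0$ to be a nonzero quadratic residue mod $q$; choose $t$ with $0 < t < q/2$ and $t^2 \equiv d_0 \pmod q$, and absorb the (small) discrepancy $(t^2 - d_0)/q$ into $d_1$. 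Factoring the difference of squares, the equation becomes
$$(n - t)(n + t) = M q^{m'} + N q^\ell,$$
with $|M|, N, t^2 < q$ and $m' \in \{m, m+1\}$, which is exactly the family $Y^2 = t^2 + M q^{m'} + N q^\ell$ treated in the abstract's main theorem. For odd prime $q \in \{3, 5\}$, that theorem yields a dichotomy: either $(m', \ell)$ parametrises one of the ``obvious'' polynomial identities --- and one checks directly that these give precisely the families $5^b+1,\ 2\cdot 5^b+1,\ 5^b+2$ (and their $q = 3$ analogues) in Theorem \ref{Cor1} --- or else $m' \le 3$.

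For the prime-power values $q \in \{2, 4, 8, 16\}$ the same reduction is carried out $2$-adically. The Pad\'e construction is insensitive to whether one localises at an odd prime or at $2$, and yields an analogous bound on $m'$; the only genuine novelty is that the $2$-adic valuation splits less rigidly between $n - t$ and $n + t$, producing a handful of additional parametric families that account for the sporadic digit pairs $\{r, s\}$ listed for $q = 16$. With $m'$ bounded and $(t, M, N)$ ranging over an explicit finite set, only the exponent $\ell$ remains free in each equation. One then bounds $\ell$ from above by combining a congruence sieve modulo $q^{m'+1}$ with a sieve modulo a small auxiliary prime $p$ in which $q$ has small multiplicative order, supplemented where necessary by a $q$-adic linear-forms-in-logarithms estimate; a direct search over the remaining finite range produces exactly the sporadic solutions in the statement, notably $n = 23$ $(q = 2)$, $n = 177$ $(q = 5)$, and $n = 10837$ $(q = 8)$.

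The principal obstacle is the engine of the paper itself, namely the $q$-adic Pad\'e step that forces $m' \le 3$. Granting it, the main practical difficulty is the combinatorial explosion in the final enumeration for $q = 16$, where on the order of $15^3$ digit triples $(d_0, d_1, d_2)$ must each be tracked through the reduction; the sustained use of residue constraints (reductions mod $2, 4, 8, 16$ and quadratic-residue restrictions on $d_0$) to discard most triples before invoking the $m'$-bound or explicit computation is what keeps the bookkeeping tractable.
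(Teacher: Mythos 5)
Your outline follows the paper's own architecture --- reduce to an equation of the form $Y^2 = t^2 + Mq^{m} + Nq^{\ell}$, invoke Theorem \ref{main-theorem} (the $q$-adic Pad\'e bound) to force $m \leq 3$ or land in a polynomial family, then exhaust the remaining cases via local sieving and explicit Ramanujan--Nagell or Mordell computations --- so the strategy matches.

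However, the reduction step as you write it is incorrect. Your ``absorb the discrepancy $(t^2 - d_0)/q$ into $d_1$'' maneuver fails: if $m \geq 2$ there is no $q^1$ term to absorb the correction into (you would be manufacturing a fourth nonzero digit), and even when $m = 1$ the adjusted coefficient can overshoot the bound $|M| \leq q - 1$ that Theorem \ref{main-theorem} requires. Happily the maneuver is also unnecessary, and the reason points to a misconception in the proposal: the paper observes that $q \in \{2, 3, 4, 5, 8, 16\}$ is exactly the set of bases for which every quadratic residue in $\{1, \ldots, q-1\}$ is already a perfect square, so $d_0 = t^2$ holds on the nose with no correction --- this square-residue property is \emph{the} reason Theorem \ref{Cor1} is restricted to these bases and not others, and your write-up obscures it. Two smaller slips are worth fixing: for $q \in \{4, 8, 16\}$ one must first normalize by dividing $n$ by an appropriate power of $2$ when $n^2$ ends in trailing zeros base $q$ (this is what produces the ``$n = t, 2t$ or $4t$'' structure in the statement); and the parametric families $n = r q^b + s$ arise (for $b \geq 1$) from the $n = 2m$ branch of the dichotomy in Theorem \ref{main-theorem}, where $N_q(n^2) = 3$, not from the $N_q(n^2) \leq 2$ case as you claim --- the latter only yields the finitely many very small $n$.
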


\noindent This immediately implies
\begin{corollary} \label{Cor2}
We have
$$
B_3(2) = \{ 7, 23 \}, \; B_3(3) = \{ 13 \}, \; B_3(4) = \{ 23, 30, 31, 46, 62, 111, 222 \},
$$
$$
 B_3(5) = \{ 56, 177 \}, \;  B_3(8) = \{ 92, 111, 124, 126, 158, 188, 316, 444, 479, 508, 10837 \}
$$
and
$$
\begin{array}{r}
B_3 (16) =  \left\{ 364, 444, 446, 500, 504, 508, 574, 628, 680, 760, 812, 888, 924, 958, \right. \\
 \left.  1012, 1016, 1020, 1022, 1784, 2296, 3832, 3966, 4088, 10837, 15864, 43348 \right\}.
\end{array}
$$
\end{corollary}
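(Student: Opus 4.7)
The plan is to read Corollary \ref{Cor2} off of Theorem \ref{Cor1} by a mechanical filtering step. Theorem \ref{Cor1} enumerates every positive integer $n$ with $n \not\equiv 0 \pmod{q}$ and $N_q(n^2) \leq 3$, whereas membership in $B_3(q)$ demands the two stronger conditions that $N_q(n^2) = 3$ \emph{exactly} and that $N_q(n) \geq 3$. The whole task is therefore to intersect the lists of Theorem \ref{Cor1} with these two additional constraints.

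The condition $N_q(n) \geq 3$ immediately eliminates every infinite polynomial family appearing in Theorem \ref{Cor1}: each such family has the shape $n = r \cdot q^b + s$ (including the special case $q^b + 1$) with $r, s$ single base-$q$ digits and $b \geq 1$, so $N_q(n) = 2$. This is precisely the $B_k(q)$-analogue of the observation that identity (\ref{lumpy}) produces infinitely many three-digit squares whose roots have only two nonzero digits. The boundary value $b = 0$, together with related small degeneracies such as values of $t$ divisible by $q$ in the $q = 4$ or $q = 16$ families, yields one- or two-digit integers whose squares are easily checked to lie outside $B_3(q)$.

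It then remains only to process, for each $q \in \{2, 3, 4, 5, 8, 16\}$, the finite list of isolated exceptional $n$ together with the companions $2t$ (when $q = 4$) and $2t, 4t$ (when $q = 16$). For each such $n$ I would compute $N_q(n)$ and $N_q(n^2)$ directly and retain $n$ exactly when $N_q(n^2) = 3$ and $N_q(n) \geq 3$. When $q = 2$ this contracts $\{1, 5, 7, 23\}$ to $\{7, 23\}$ (since $N_2(1) = 1$ and $N_2(5) = 2$); when $q = 3$ it leaves only $\{13\}$; and analogous calculations recover the sets in Corollary \ref{Cor2} for the remaining values of $q$. The principal -- indeed only -- obstacle is clerical: the $q = 16$ list is long enough that some care is needed not to drop or introduce an element in the filtering. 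All substantive mathematical content resides in Theorem \ref{Cor1}, which once granted reduces the corollary to a finite computation.
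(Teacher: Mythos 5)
Your approach is exactly the one the paper takes: the authors observe that Corollary~\ref{Cor2} is ``immediately implied'' by Theorem~\ref{Cor1}, and the implicit proof is precisely the filter you describe---retain only those $n$ from Theorem~\ref{Cor1}'s lists with $N_q(n)\geq 3$ and $N_q(n^2)=3$ exactly. The one small inaccuracy is your blanket claim that every member of the infinite polynomial families has $N_q(n)=2$. That is true for the $t=r\cdot q^b+s$ themselves and for $q\in\{2,3,4,5,8\}$, but for $q=16$ the companions $2t$ and $4t$ can acquire a carry (for instance $t=12\cdot 16^b+2$ gives $2t=16^{b+1}+8\cdot 16^b+4$, which has three nonzero hex digits). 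Those candidates are nonetheless excluded because their squares then have four nonzero digits, so the explicit digit-counting filter you propose still eliminates them; it is only the shorthand justification ``$N_q(n)=2$'' that is too quick. Since your method is ultimately a direct check of $N_q(n)$ and $N_q(n^2)$ over a finite candidate set, the conclusion stands.
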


We note that the case $q=2$ of Theorem \ref{Cor1}  was originally proved by Szalay \cite{Sz02} in 2002, through appeal to a result of Beukers \cite{Beu81}. This latter work was based upon Pad\'e approximation to the binomial function (as are the results of the paper at hand, though our argument is quite distinct). In 2012, the first author \cite{Be}
treated the case $q=3$ in Theorem \ref{Cor1}. We should point out that there are computational errors in the last two displayed equations on page 4 of \cite{Be} that require repair; we will do this in the current paper.

Our main result which leads to Theorem \ref{Cor1}  is actually rather more general -- we state it for a prime base, though our arguments extend to more general $q$ with the property that $q$ has a prime-power divisor $p^\alpha$ with $p^\alpha > q^{3/4}$. We prove
\begin{theorem} \label{main-theorem}
If $q$ is an odd prime, if we have a solution to the equation
\begin{equation} \label{three-digit2}
Y^2 = t^2 + M q^{m} + N q^{n},
\end{equation}
in integers $Y, t, M, N, m$ and $n$ satisfying 
\begin{equation} \label{conditions}
t, Y, N  \geq 1, \; \;  |M|, N, t^2 \leq q-1 \;  \mbox{ and } \; \; 1 \leq m < n,
\end{equation}
then either $n=2m$ and
$Y = q^m \cdot Y_0 \pm t$, for integers $t$ and $Y_0$ with $\max \{ Y_0^2, 2 t Y_0 \} < q$, or we have $m \leq 3$.
\end{theorem}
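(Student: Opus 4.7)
My plan is to attack Theorem \ref{main-theorem} via $q$-adic Pad\'e approximation to $(1+z)^{1/2}$, in the spirit of the Beukers--Szalay approach for $q = 2$ and the first author's treatment of $q = 3$. The opening step is a Hensel-type reduction: since $q$ is an odd prime and $\gcd(t, q) = 1$ (forced by $1 \leq t^2 \leq q - 1$), the congruence $Y^2 \equiv t^2 \pmod{q^m}$ has the two lifts $Y \equiv \pm t$, so I may write $Y = q^m Y_0 + \eta t$ with a unique $\eta \in \{\pm 1\}$ and some $Y_0 \in \mathbb{Z}$. Substituting into \eqref{three-digit2} and dividing by $q^m$ yields the auxiliary equation
\begin{equation*}
q^m Y_0^2 + 2 \eta t Y_0 - M = N q^{n-m}.
\end{equation*}
The theorem's ``polynomial family'' alternative is precisely the case $n = 2m$, $M = 2 \eta t Y_0$, $N = Y_0^2$, under which the size conditions $|M|, N \leq q - 1$ become $Y_0^2 < q$ and $2 t Y_0 < q$. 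It therefore suffices to show that every solution outside this identity forces $m \leq 3$.

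Next, I set $\alpha = (M q^m + N q^n)/t^2 \in q^m \mathbb{Z}_q$, so that $(Y/t)^2 = 1 + \alpha$ and $Y/t = \eta(1 + \alpha)^{1/2}$ in $\mathbb{Z}_q$. For each $k \geq 1$, the $k$-th diagonal Pad\'e approximant to $(1 + z)^{1/2}$ furnishes polynomials $P_k, Q_k \in \mathbb{Q}[z]$ of degree $\leq k$ with $P_k(0) = Q_k(0) = 1$ and a nonzero rational $c_k$ satisfying
\begin{equation*}
P_k(z)^2 - (1 + z) Q_k(z)^2 = c_k z^{2k+1}.
\end{equation*}
Evaluating at $z = \alpha$, clearing denominators (multiplying through by a suitable power of $t$ and an integer depending only on $k$), and factoring, I obtain the integer identity
\begin{equation*}
(t \Phi_k - \eta Y \Psi_k)(t \Phi_k + \eta Y \Psi_k) = C_k (M q^m + N q^n)^{2k+1},
\end{equation*}
with $\Phi_k, \Psi_k, C_k \in \mathbb{Z}$. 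Reducing modulo $q^m$, the ``sum'' factor becomes $2 t^{2k+1}$ times a $q$-adic unit (when $\eta = -1$ one swaps the roles of the two factors), so the ``difference'' $L_k := t \Phi_k - \eta Y \Psi_k$ absorbs the full $q$-divisibility: $v_q(L_k) \geq (2k+1) m + v_q(C_k)$. Crude estimates on the coefficients of $P_k, Q_k$, combined with $|\alpha| \leq 2 q^{n+1}$ and $|Y| \leq q^{(n+1)/2}$, give an archimedean upper bound of the form $|L_k| \leq A_k\, q^{(k + 1/2)(n+1)}$, where $A_k$ depends only on $k$.

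If $L_k \neq 0$, the two estimates combine to
\begin{equation*}
(2k+1)\bigl(m - (n+1)/2\bigr) \leq \log_q A_k - v_q(C_k),
\end{equation*}
which for $k$ chosen appropriately small (likely $k = 1$ or $2$, where the Pad\'e constants can be computed exactly) already rules out $n \leq 2m - 2$ and pins down $m \leq 3$ in that regime. The vanishing case $L_k = 0$ is unwound directly and returns us to the polynomial family. The main obstacle, where I expect the bulk of the technical work, is the regime $n \geq 2m - 1$, which the single comparison above does not close. To cover it, I would switch perspective: for $n = 2m$ outside the polynomial family, exploit the refined identity $q^m (Y_0^2 - N) = M - 2 \eta t Y_0$ coming from the auxiliary equation and reapply the Pad\'e construction with the sharper parameter $\tilde\alpha = (M - 2 \eta t Y_0)/q^m$; for $n > 2m$, where $Y_0$ is a very precise rational approximation to $q^{(n - 2m)/2} \sqrt{N}$, invoke a parallel Pad\'e argument on $(1 - z)^{1/2}$ at a small $z$ built from $Y_0$, $N$, and $q^{n - 2m}$, yielding the needed effective irrationality measure. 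Extracting the sharp constant $m \leq 3$ --- as opposed to a weaker $m \leq C(q)$ --- uniformly across all these sub-cases is the central technical difficulty and requires careful bookkeeping of the explicit Pad\'e constants $A_k, C_k$ and the leading coefficients of $P_k, Q_k$.
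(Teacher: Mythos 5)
Your opening Hensel reduction, $Y = q^m Y_0 + \eta t$ with $\eta \in \{\pm 1\}$, matches the paper's, but your Pad\'e setup diverges in a way that opens a genuine gap. You evaluate the diagonal approximant identity $P_k(z)^2 - (1+z)Q_k(z)^2 = c_k z^{2k+1}$ at $z = \alpha = (Mq^m + Nq^n)/t^2$, so the error term has $q$-adic valuation only $(2k+1)m + v_q(C_k)$. Against the archimedean bound $|L_k| \lesssim q^{(k+1/2)(n+1)}$ this yields $(2k+1)\bigl(m - (n+1)/2\bigr) \leq O_k(1)$, which is vacuous unless $n \leq 2m-2$ --- precisely the regime the paper rules out by the elementary inequality following (\ref{pengy}) before any Pad\'e machinery appears. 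The substance of the theorem lies in $n \geq 2m-1$, where your comparison gives nothing, and the two repairs you sketch are not viable as stated: the case $n=2m$ is dispatched in the paper by a short elementary computation on (\ref{square}), and for $n > 2m$ the archimedean quality of the approximation $Y_0 \approx q^{(n-2m)/2}\sqrt{N}$ corresponds to an irrationality exponent of about $2m/(n-2m)$, which falls below the Liouville barrier once $n \geq 3m$ and in no case can deliver a bound on $m$ uniform in $q$ and $n$.

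The missing idea is to evaluate the Pad\'e relation at $x = Mq^m/t^2$ rather than at $\alpha$. With $\eta = \sqrt{t^2 + Mq^m} \in \mathbb{Z}_q$, identity (\ref{aye}) gives $v_q\bigl(tP_{n_1,n_2}(x) - \eta Q_{n_1,n_2}(x)\bigr) \geq m(n_1+n_2+1)$, and since $\eta^2 \equiv Y^2 \pmod{q^n}$ forces $\eta \equiv \pm Y \pmod{q^n}$, one may replace $\eta$ by $\pm Y$ to produce a nonzero integer (nonvanishing via the Wronskian relation (\ref{zero}) for one of two adjacent index pairs) that is divisible by the full $q^n$, not merely $q^{(2k+1)m}$. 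Even this needs two further ingredients your sketch omits: the gap lemma $n \geq 10m-10$ outside the polynomial families (Lemma \ref{main-gap}, proved by iterative bootstrapping of the $q$-adic expansion of $\sqrt{t^2+Mq^m}$) and off-diagonal indices $n_1 \approx 3n/(4m)$, $n_2 \approx n/(4m)$ (Definition \ref{yucca}) that balance the two archimedean contributions at roughly $q^{3n/4}$; only then does $q^n$ beat the archimedean bound and pin down $q$, and a final computation closes out the small $m$. I would rebuild the argument around the evaluation point $Mq^m/t^2$ and the congruence $\eta \equiv \pm Y \pmod{q^n}$.
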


In the special case $t=1, M = \pm 1, N=1$, a sharper version of this result already appears as the main theorem of Luca \cite{Luca}; the proof of this result relies upon primitive divisors in binary recurrence sequences and does not apparently generalize.
It seems likely that the last upper bound  in Theorem \ref{main-theorem} can be replaced by $m \leq 2$; indeed our argument can be sharpened to prove this for ``many'' pairs $(m,n)$, though not all. We know of a number of families of solutions to (\ref{three-digit2}), with, for instance, $(m,n) = (2,6)$, $q=r^2+1$ prime, $r \in \mathbb{Z}$ :
\begin{equation} \label{huge}
\left( \frac{1}{2} r ( r^6+5r^4+7r^2+5) \right)^2 = r^2 + (r^2-1) q^2 + \left( \frac{r^2+4}{4} \right) q^6
\end{equation}
and $(m,n)=(1,5)$, for  $q=64 r^2+1$, corresponding to the identity
$$
\left(r (32768r^4+1280r^2+15 ) \right)^2 = 9r^2 - (40r^2+1) q + q^5.
$$
Further families with $(m,n) = (1,3), (2,3)$ and $(1,4)$ are readily observed (as are many more examples with $(m,n)=(1,5)$). Beyond these, we also know a few (possibly) sporadic examples, with $(m,n) = (1,6), (1,7)$ and $(2,7)$ :
$$
430683365^2=9^2- 51 \cdot 311 + 205 \cdot 311^6,
$$
$$
6342918641^2 = 25^2 - 97 \cdot 673 + 433 \cdot 673^6,
$$
$$
49393781643^2 = 34^2 - 875 \cdot 1229 + 708 \cdot 1229^6,
$$
$$
559^2 = 1^2 -4 \cdot 5 + 4 \cdot 5^7,
$$
$$
574588^2 = 3^2 + 13 \cdot 31 + 12 \cdot 31^7,
$$
$$
1815^2 = 2^2 + 7^2 + 4 \cdot 7^7
$$
and
$$
20958^2 = 2^2 - 11 \cdot 13^2 + 7 \cdot 13^7.
$$

For a fixed odd prime $q$, Theorem \ref{main-theorem} provides an effective way to completely solve equation (\ref{three-digit2}) under the conditions of  (\ref{conditions}). Indeed, given an upper bound upon $m$, say $m_0$, solving (\ref{three-digit2}) with (\ref{conditions}) amounts to treating at most $O(q^{5/2} m_0)$ ``Ramanujan-Nagell'' equations of the shape
\begin{equation} \label{RamNag}
Y^2 +D = N q^n \; \; \mbox{ where } \; \; D = -(t^2 + M q^m).
\end{equation}
These can be handled efficiently via algorithms from Diophantine approximation; see Peth\H{o} and de Weger \cite{PW} or de Weger \cite{Weg0} for details.
Alternatively, if $n \equiv n_0 \mod{3}$, where $n_0 \in \{ 0, 1, 2 \}$, we may rewrite (\ref{three-digit2}) as
\begin{equation} \label{uwe}
U^2 = V^3+k,
\end{equation}
where
\begin{equation} \label{ewe2}
U = N q^{n_0} Y, \; \; V = q^{\frac{n+2n_0}{3}} N \; \; \mbox{ and } \; \; k = N^2 q^{2n_0} \left( t^2 + M q^m \right).
\end{equation}
We can therefore solve the equation (\ref{three-digit2}) if we are able to find the ``integer points'' on at most $O(q^{5/2} m_0)$ ``Mordell curves'' of the shape (\ref{uwe}), where we may subsequently check to see if any solutions encountered satisfy (\ref{ewe2}).  The integer points on these curves are known for $|k| \leq 10^7$ (see \cite{BeGh}) and are listed at
\url{http://www.math.ubc. ca/~bennett/BeGa-data.html}. For larger values of $|k|$, one can, in many cases, employ Magma or a similar computational package to solve equations of the shape (\ref{uwe}). For our purposes, however, we are led to consider a number of values of $k$ for which approaches to solving (\ref{uwe}) reliant upon computation of a full Mordell-Weil basis (as Magma does) for the corresponding curve are extremely time-consuming. We instead choose to solve a number of equations of the form (\ref{RamNag}), via lower bounds for linear forms in $p$-adic logarithms and reduction techniques from Diophantine approximation, as in \cite{PW}. An alternative approach, at least for the equations we encounter, would be to appeal to strictly elementary properties of the corresponding binary recurrences, as in a paper of Bright \cite{Bri} on the Ramanujan-Nagell equation.

It is probably worth mentioning that similar problems to those discussed in this paper, only for higher powers with few digits, are treated in a series of papers by the first author, together with Yann Bugeaud \cite{BeBu} and with Bugeaud  and Maurice Mignotte \cite{BeBuMi1}, \cite{BeBuMi2}. The results therein require rather different techniques than those employed here, focussing on lower bounds for linear forms in logarithm, $p$-adic and complex.

\section{Three digits, without loss of generality}

Suppose that $q > 1$ is an integer and that we have a square $y^2$ with (at most) three nonzero base-$q$ digits. If $q$ is either squarefree or a square, it follows that $y$ is necessarily a multiple by some power of $q$ (or $\sqrt{q}$ if $q$ is a square) of an integer $Y$ satisfying a Diophantine equation of the shape
\begin{equation} \label{wilbur}
Y^2 = C + M \cdot q^m + N \cdot q^n,
\end{equation}
where $C, M, N, m$ and $n$ are nonnegative integers with 
\begin{equation} \label{pulp}
C, M, N \leq q-1 \; \; \mbox{ and } \; \; 1 \leq m < n.
\end{equation}
If $q$ is neither a square nor squarefree, we may similarly reduce to consideration of equation (\ref{wilbur}), only with weaker bounds for $M$ and $N$. 

The machinery we will employ to prove Theorems \ref{Cor1} and \ref{main-theorem} requires that, additionally, the integer $C$  in equation (\ref{wilbur}) is square. 
Whilst this is certainly without loss of generality if every quadratic residue modulo $q$  in the range $1 \leq C < q$ is itself a square, it is easy to show that such a condition is satisfied only for $q \in \{ 2, 3, 4, 5, 8, 16 \}$. If we have the somewhat weaker constraint upon $q$ that every least positive quadratic residue $C$ modulo $q$ is either a square or has the property that it fails to be a quadratic residue modulo $q^k$ for some exponent $k>1$, then we may reduce to consideration of (\ref{wilbur}) with either $C$ square, or $m$ bounded. This weaker condition is satisfied for the following $q$ : 
$$
\begin{array}{c}
q=2, 3, 4, 5, 6, 8, 10, 12, 14, 15, 16, 18, 20, 21, 22, 24, 28, 30, 36, 40, 42, 44, 48, 54, 56, \\
60, 66, 70, 72, 78, 84, 88, 90, 102, 120, 126, 140, 150, 156, 168, 174, 180, 210, 240, \\
330, 390, 420, 462, 630, 660, 840, 2310.
\end{array}
$$
Of these, the only ones with a prime power divisor $p^\alpha$ with $p^\alpha > q^{3/4}$ (another requirement for our techniques to enable the complete determination of squares with three base-$q$ digits) are
$$
q =2, 3, 4, 5, 8, 16, 18, 22  \mbox{ and }  54.
$$

The principal reason we restrict our attention to equation (\ref{wilbur}) with $C$ square is to guarantee that the exponent $n$ is relatively large compared to $m$, enabling us to employ machinery from Diophantine approximation (this is essentially the content of Section \ref{gaps}). This might not occur if $C$ is nonsquare, as  examples like
$$
45454^2=13 + 22 \cdot 23^5 + 13 \cdot 23^6
$$
and
$$
9730060^2 = 46 + 96 \cdot 131^5 + 18 \cdot 131^6
$$
illustrate.

\section{Three digits : gaps between exponents} \label{gaps}

For the next few sections, we will restrict attention to the case where the base $q$ is an odd prime. 
Let us now suppose that we have a solution to (\ref{three-digit2}) with (\ref{conditions}). In this section, we will show that necessarily the ratio $n/m$ is not too small, except when $Y = q^m \cdot Y_0 \pm t$ for small $Y_0$. Specifically, we will prove the following result. 

\begin{lemma} \label{main-gap}
If there exists a solution to equation (\ref{three-digit2}) with (\ref{conditions}) and $m \geq 4$, then either $n=2m$ and
$Y = q^m \cdot Y_0 \pm t$, for integers $t$ and $Y_0$ with $\max \{ Y_0^2, 2 t Y_0 \} < q$, or we have $n \geq 10m-10$.
\end{lemma}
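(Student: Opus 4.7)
My plan is to reduce to a key factorization identity, then to handle the three natural ranges $n < 2m$, $n = 2m$, and $n > 2m$ in turn.

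First I would rewrite $Y^2 - t^2 = q^m(M + N q^{n-m})$ and factor the left-hand side. Since $t^2 \leq q - 1$ gives $|2t| < q$, we have $\gcd(2t, q) = 1$, so $q^m$ divides exactly one of $Y \pm t$. Writing $Y = q^m Y_0 + \epsilon t$ with $\epsilon \in \{\pm 1\}$ and $Y_0 \geq 1$ (the case $Y_0 = 0$ is ruled out since $M q^m + N q^n > 0$), substitution produces the identity
\[
q^m Y_0^2 + 2 \epsilon t\, Y_0 \;=\; M + N q^{n-m},
\]
which I will call $(\star)$. The trivial bound $Y^2 \leq (q-1)(1 + q^m + q^n)$ then yields $Y_0 \leq \sqrt{3q}\, q^{(n-2m)/2}$.

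For $n < 2m$, the size bound forces $Y_0 = 1$ and $\epsilon = -1$ (so $Y = q^m - t$), and $(\star)$ becomes $N q^{n-m} = q^m - 2t - M$. A divisibility-by-$q^{n-m}$ analysis, combined with $1 \leq N \leq q - 1$, shows no admissible pair $(N, n-m)$ exists when $m \geq 4$. For $n = 2m$, rewriting $(\star)$ as $q^m(Y_0^2 - N) = M - 2 \epsilon t\, Y_0$ and comparing sizes---the right-hand side is $O(q^{3/2})$ while the left-hand side has $q$-adic valuation at least $m \geq 4$---we conclude $Y_0^2 = N$ and $M = 2 \epsilon t\, Y_0$, giving the asserted obvious family with $\max\{Y_0^2, 2tY_0\} \leq q - 1$.

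The substantive case is $n > 2m$. Reducing $(\star)$ modulo $q^m$ forces $Y_0 \equiv M/(2\epsilon t) \pmod{q^m}$. Writing $Y_0 = y_0 + q^m Y_1$ with $y_0 \in [0, q^m)$ the unique residue and $Y_1 \in \mathbb{Z}_{\ge 0}$, and setting $c_0 := (2\epsilon t\, y_0 - M)/q^m$ (which satisfies $|c_0| < 2\sqrt{q-1}$), substitution into $(\star)$ and division by $q^m$ yields an equation of the same shape with $Y_1$ in place of $Y_0$. Iterating, we extract a base-$q^m$ expansion $Y_0 = \sum_{j=0}^{K-1} y_j q^{jm}$, where each $y_j$ is determined modulo $q^m$ by the preceding data and bounded carries $c_0, \ldots, c_{j-1}$. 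The size bound on $Y_0$ gives $K \leq \lceil (n-2m)/(2m)\rceil + 1$, so $K \leq 4$ whenever $n \leq 10m - 11$.

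The remaining work is to rule out each $K \in \{1, 2, 3, 4\}$ for $m \geq 4$. For $K = 1$ the lifted equation reads $y_0^2 + c_0 = N q^{n-2m}$; eliminating $y_0$ via $M = 2\epsilon t\, y_0 - q^m c_0$ yields $M^2 + 4 t^2 c_0 \equiv 0 \pmod{q^{n-2m}}$, and since $|M^2 + 4 t^2 c_0| = O(q^2)$ this forces $n - 2m \leq 2$, a range excluded by direct check using $|M|, N, t^2 \leq q - 1$. Cases $K = 2, 3, 4$ proceed analogously but with longer cascades of congruence conditions; at each step a new digit $y_j$ is pinned down modulo $q^m$, and the carries accumulate in a controlled way. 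The main obstacle will be case $K = 4$, where four rounds of lifting produce a coupled quadratic system in $y_0, \ldots, y_3$ and $c_0, \ldots, c_3$; careful bookkeeping of the resulting size bounds on these carries, together with the hard digit-bound $|M| \leq q - 1$, is what ultimately yields the threshold $n \geq 10m - 10$.
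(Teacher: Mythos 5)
Your overall scaffolding --- factor $Y^2 - t^2$, use $\gcd(2t,q)=1$ to write $Y = q^m Y_0 + \epsilon t$, obtain $(\star)$, and then split on $n < 2m$, $n = 2m$, $n > 2m$ --- matches the paper, and your base-$q^m$ digit extraction of $Y_0$ is conceptually the same process as the paper's truncation of the $q$-adic Taylor series of $\sqrt{t^2 + x}$ (each new ``digit'' $y_j$ corresponds to the next Taylor coefficient, and each ``carry'' $c_j$ to the paper's $\kappa, \kappa_1, \kappa_2$). The $n < 2m$ and $n = 2m$ treatments are fine. But the core of the lemma is the $n > 2m$ analysis, and there you have a genuine gap, plus a concrete error in the one case you do spell out.

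On the error: for $K=1$ your elimination gives
\begin{equation*}
M^2 + 2Mq^m c_0 + q^{2m} c_0^2 + 4t^2 c_0 = 4t^2 N q^{n-2m},
\end{equation*}
so reducing modulo $q^{n-2m}$ yields $M^2 + 4t^2 c_0 \equiv 0$ only when $n - 2m \le m$; for $m < n - 2m < 2m$ (a range that does occur for $K=1$) the cross term $2Mq^m c_0$ survives and your stated congruence is false. The correct conclusion is only $M^2 + 4t^2 c_0 \equiv 0 \pmod{q^{\min(m,\,n-2m)}}$, and closing even this subcase then requires further argument (e.g.\ showing $M^2 + 4t^2 c_0 = 0$ forces $2t \mid M$ and leads to $q \mid M_1$). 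This is a sign that the congruence bookkeeping is more delicate than your sketch allows for.

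On the gap: the cases $K = 2, 3, 4$ are not ``analogous with longer cascades''; they are where the paper does nearly all the work, and size bounds on the carries \emph{alone} do not suffice. The paper's iteration of the Taylor expansion produces the intermediate bounds $n \ge 4m-3$, then $n \ge 6m-6$, then $n \ge 8m-8$; the jump from $8m-8$ to $10m-10$ is not obtained by another round of size estimates but by mining additional congruences between the top-level carry $\kappa_2$ and $M$ (the analysis around (\ref{peach-1}), (\ref{peach-2}), (\ref{puppy})--(\ref{kap2})), introducing the auxiliary integer $\upsilon$, and then handling $q = 3, 5$ and $7 \le q \le 19$ by direct, $q$-specific computation. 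None of this is foreseen in your proposal; ``careful bookkeeping of the resulting size bounds on these carries, together with the hard digit-bound $|M| \le q-1$'' would land you at roughly $n \ge 8m - c$, not at $n \ge 10m - 10$. To finish you would need to articulate the analogue of the $\upsilon$-congruence in your digit notation and carry out the small-$q$ checks, and at present the proposal provides neither.
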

Let us begin by considering the case where $M=0$ (where we will relax the condition that $n \geq 2$). Since $q$ is an odd prime, we may write
$$
Y = q^{n} \cdot Y_0 + (-1)^\delta t,
$$
for some positive integer $Y_0$ and $\delta \in \{ 0,1 \}$, whence
$$
N =  q^{n} \cdot Y_0^2 + (-1)^\delta 2 t \cdot Y_0.
$$
Since $1 \leq N, t^2 \leq q-1$, if $n \geq 2$, it follows that
$$
q-1 \geq q^2 - 2 \sqrt{q-1},
$$
a contradiction since $q \geq 3$. We thus have $n=1$, so that
$$
N = q \cdot Y_0^2 + (-1)^\delta 2 t \cdot Y_0,
$$
whence $N < q$ implies that $Y_0=\delta =1$, corresponding to the identities
$$
\left( q - t \right)^2 = t^2 + (q-2t) q.
$$
It is worth observing that whilst there are no solutions to (\ref{wilbur}) with (\ref{pulp}), $q$ an odd prime and $M=0$, provided $C$ is square, this is not true without this last restriction, as the identity
$$
32330691^2 = 182 + 157 \cdot 367^5
$$
illustrates.

We may thus, without loss of generality, suppose that $M \neq 0$ in what follows
and write
$$
Y = q^{m} \cdot Y_0 + (-1)^\delta t,
$$
for some positive integer $Y_0$ and $\delta \in \{ 0,1 \}$, so that 
\begin{equation} \label{pengy}
q^{m} Y_0^2 + 2 (-1)^\delta t  \cdot Y_0 =  M + N q^{n-m}.
\end{equation}
We thus have 
$$
q^m - 2 q^{1/2} < q^{n-m+1} - q^{n-m} + q.
$$
If $n \leq 2m-2$ (so that $m \geq 3$), it follows that $q^m - 2 q^{1/2} < q^{m-1} - q^{m-2}+q$, an immediate contradiction.
If $n = 2m-1$, then
$$
q^{m-1}  < q+ 2 q^{1/2},
$$
and so $m=2$, $n=3$, whereby (\ref{pengy}) becomes
$$
q^{2} Y_0^2 + 2 (-1)^\delta t  \cdot Y_0 =  M + N q \leq (q-1) q + q-1 = q^2-1.
$$
We thus have $Y_0=1$ and  $\delta = 1$. Since $q \mid M - 2 (-1)^\delta t =M+2t$, it follows that either $M=-2t$ or $M=q-2t$. In the first case, we have that $q \mid N$, a contradiction. The second corresponds to the identity
\begin{equation} \label{ident-7}
(q^2-t)^2 = t^2 + (q-2t) q^2 + (q-1) q^3.
\end{equation}

Otherwise, we may suppose that $n \geq 2m$.
From the series expansion
$$
(t^2+x)^{1/2} = t + \frac{x}{2t} - \frac{x^2}{8 t^3} + \frac{x^3}{16t^5} - \frac{5x^4}{128 t^7} + \frac{7 x^5}{256 t^9} - \frac{21 x^6}{1024 t^{11}} + 
\frac{33 x^7}{2048t^{13}} - \frac{429 x^8}{32768 t^{15}} +
 \cdots,
$$
 and (\ref{three-digit2}), it follows that 
$$
Y \equiv (-1)^\delta \left( t + \frac{Mq^m}{2t} \right) \mod{q^{2m}},
$$
so that
$$
2t Y \equiv (-1)^\delta \left( 2t^2 + Mq^m \right) \mod{q^{2m}}.
$$ 
If $2tY = (-1)^\delta \left( 2t^2 + Mq^m \right)$, then
$$
n=2m, \; \; \frac{M^2}{4t^2} = N  \; \; \mbox{ and } \; \; |Y_0| = \left| \frac{M}{2t} \right|,
$$
corresponding to the identity
\begin{equation} \label{identity1}
\left( q^{m} \cdot Y_0 + (-1)^\delta t \right)^2 =t^2 + (  (-1)^\delta t  2 Y_0) \cdot q^m +   Y_0^2 \cdot q^{2m},
\end{equation}
where $\max \{ t^2, Y_0^2, 2 t Y_0 \} < q$.

If we are not in situation (\ref{identity1}), we may write
\begin{equation} \label{fishie}
2tY = \kappa q^{2m} + (-1)^\delta (M q^m + 2t^2),
\end{equation}
for some positive integer $\kappa$, so that
\begin{equation} \label{bound-2b}
4 t^2 \cdot N \cdot q^{n-2m}  
= \kappa^2 q^{2m} +2 \kappa (-1)^\delta \left( M q^m+2 t^2 \right) + M^2.
\end{equation}
We rewrite this as
\begin{equation} \label{square}
4 t^2 \cdot N \cdot q^{n-2m}  
= \left( \kappa q^m + (-1)^\delta M \right)^2 + \kappa (-1)^\delta 4 t^2.
\end{equation}
If $n=2m$, this becomes
$$
4 t^2 \cdot N 
= \left( \kappa q^m + (-1)^\delta M \right)^2 + \kappa (-1)^\delta 4 t^2,
$$
the left-hand-side of which is at most $4 (q-1)^2$. Since the right-hand-side is at least
$$
(\kappa q^m-q+1)^2-4 (q-1),
$$
it follows that $m=1$ and $\kappa \in \{ 1, 2 \}$. If $\kappa =1$, we have
$$
q + (-1)^\delta M \equiv 0 \mod{2t},
$$
say $q = 2 t q_0 - (-1)^\delta M$, for $q_0$ a positive integer with $N = q_0^2 + (-1)^\delta$,
with corresponding identity
\begin{equation} \label{ident-4}
\left( q_0 q + (-1)^\delta t \right)^2 = t^2 + (-1)^\delta (2t q_0-q) q + (q_0^2 + (-1)^\delta) q^2,
\end{equation}
where $t, q_0 < \sqrt{q}$. If $\kappa=2$,  then $M$ is necessarily even, say $M=2 M_0$, and
$$
q + (-1)^\delta M_0 \equiv 0 \mod{t},
$$
say $q= t q_0 - (-1)^\delta M_0$. This corresponds to
\begin{equation} \label{ident-3}
\left( q_0 q + (-1)^\delta t \right)^2 = t^2 + (-1)^\delta 2 (t q_0-q) q + (q_0^2 + 2 (-1)^\delta) q^2,
\end{equation}
where we require that $q/2 < t q_0 < 3q/2$, $t < \sqrt{q}$ and $q_0 < \sqrt{q - 2(-1)^\delta}$.

With these families excluded, we may thus assume that $n \geq 2m+1$ and that (\ref{square}) is satisfied.
For the remainder of this section, we will suppose that $m \geq 4$.
Then, since the right-hand-side of (\ref{bound-2b})  is 
\begin{equation} \label{wacky}
\kappa^2 q^{2m} \left( 1 +(-1)^\delta \left( \frac{2 M}{\kappa} q^{-m} + \frac{4t^2}{\kappa} q^{-2m} \right) + \frac{M^2}{\kappa^2} q^{-2m} \right),
\end{equation}
and we assume that $|M| < q$ and  $t < \sqrt{q}$, we have
\begin{equation} \label{fudge}
N \cdot q^{n-2m} > \frac{1-2 q^{1-m}-4 q^{1-2m}}{4} \, q^{2m-1}.
\end{equation}
Since $N < q$, $m \geq 4$ and $q \geq 3$ this implies that
$$
q^{n-2m+1} > \frac{2021}{8748} q^{2m-1}
$$
and hence $n \geq 4m-3 \geq 3m+1$. 
We  thus have
$$
Y \equiv (-1)^\delta \left( t + \frac{Mq^m}{2t} - \frac{M^2 q^{2m}}{8t^3} \right) \mod{q^{3m}}, 
$$
whence
$$
8 t^3 Y  \equiv (-1)^\delta \left( 8t^4 + 4t^2 Mq^m - M^2 q^{2m} \right) \mod{q^{3m}}.
$$
If
$$
8 t^3 Y  = (-1)^\delta \left( 8t^4 + 4t^2 Mq^m - M^2 q^{2m} \right),
$$
then 
$$
64 t^6 \cdot N \cdot q^{n-3m} = M^4 q^{m}  - 8 t^2 \cdot M^3,
$$
an immediate contradiction, since $n \geq 3m+1$ and $q$ is coprime to $tM$.

We may thus assume that
$$
8 t^3 Y  =\kappa_1 q^{3m}  + (-1)^\delta \left( - M^2 q^{2m} + 4t^2 Mq^m + 8t^4 \right),
$$
for a positive integer $\kappa_1$,
whereby
\begin{equation} \label{hurt}
\begin{array}{c}
64 t^6 N q^{n-3m}  = \kappa_1^2 q^{3m} + M^4 q^{m} - 8t^2 M^3  \\
 + (-1)^\delta  \left(-2 \kappa_1 M^2 q^{2m} + 8 t^2 \kappa_1 M q^{m} + 16 t^4 \kappa_1  \right) \\
\end{array}
\end{equation}
and so
\begin{equation} \label{trouble}
64 t^6 N q^{n-3m} > q^{3m} - 2 M^2 q^{2m} - 8 t^2 |M| q^m.
\end{equation}
This implies that
\begin{equation} \label{frog}
64 q^{n-3m+4} > q^{3m} \left( 1 - 2 q^{2-m} - 8 q^{2-2m} \right).
\end{equation}
For $q \geq 7$, we therefore have
$$
q^{n-3m+4} > \frac{1}{67} \, q^{3m},
$$
so that  $n \geq 6m-4$ if $q \geq 67$.  If $q=3$, we obtain the inequality  $n \geq 6m-4$ directly from (\ref{trouble}). For
each $5 \leq q \leq 61$, (\ref{frog}) implies that $n \geq 6m-6$. In every case, we may thus assume that $n \geq 6m-6 > 4m$, so that
$$
Y \equiv (-1)^\delta \left( t + \frac{Mq^m}{2t} - \frac{M^2 q^{2m}}{8t^3} + \frac{M^3 q^{3m}}{16 t^5} \right) \mod{q^{4m}}
$$
and hence 
$$
16t^5 Y = \kappa_2 q^{4m} + (-1)^\delta  \left(  16t^6 + 8t^4Mq^m - 2 t^2 M^2 q^{2m} + M^3 q^{3m} \right)
$$
for a nonegative integer $\kappa_2$, whence
\begin{equation} \label{flounder}
\begin{array}{c}
256 t^{10} N q^{n-4m} =  \kappa_2^2 q^{4m} + (-1)^\delta \left( 32 \kappa_2 t^6  + 16 \kappa_2 t^4 M q^{m}  \right. \\
\left. -4 \kappa_2 t^2 M^2 q^{2m} + 2 \kappa_2 M^3 q^{3m} \right) +  20t^4M^4-4t^2M^5 q^{m}+M^6 q^{2m}.\\
\end{array}
\end{equation}

If $\kappa_2=0$, 
$$
256 t^{10} N q^{n-4m} = 20t^4M^4-4t^2M^5 q^{m}+M^6 q^{2m},
$$
contradicting the fact that $q \not \; \mid t M$.
We therefore have that
\begin{equation} \label{sole}
256 t^{10} N q^{n-4m}  > q^{4m} - 2 |M|^3 q^{3m} - 4 t^2 M^2 q^{2m}
\end{equation}
and so
\begin{equation} \label{brown}
q^{n-4m+6} > \frac{1}{263} q^{4m},
\end{equation}
whence  $n \geq 8m-8$ unless, possibly, $q \in \{ 3, 5 \}$. If $q=3$, since $t=1$ and $|M|, N \leq 2$, inequality (\ref{sole}) implies a stronger inequality. If $q=5$, $t \leq 2$, $|M|, N \leq 4$ and inequality (\ref{sole}) again yield $n \geq 8m-8$ and hence we may conclude, in all cases that, provided $m \geq 4$, we have
$n \geq 8m-8 \geq 6m$.

From (\ref{flounder}), we  have
\begin{equation} \label{peach-1}
(-1)^\delta 8 \kappa_2 t^2 +   5 M^4 \equiv 0 \mod{q^m}.
\end{equation}
If this is equality, we must have $\delta=1$ and so (\ref{flounder}) becomes
$$
\begin{array}{c}
256 t^{10} N q^{n-5m} =  \kappa_2^2 q^{3m} - 16 \kappa_2 t^4 M   
+4 \kappa_2 t^2 M^2 q^{m} - 2 \kappa_2 M^3 q^{2m}  -4t^2M^5 +M^6 q^{m}.\\
\end{array}
$$
It follows that
\begin{equation} \label{peach-2}
4 \kappa_2 t^2 + M^4 \equiv 0 \mod{q^{m}}.
\end{equation}
Combining (\ref{peach-1}) and (\ref{peach-2}), we thus have
$$
7 M^4 \equiv 0 \mod{q^{m}},
$$
contradicting the fact that $m \geq 4$, while $0 < |M| < q$. 

We thus have 
\begin{equation} \label{puppy}
(-1)^\delta 8 \kappa_2 t^2 +   5 M^4 = \upsilon q^m
\end{equation}
for some nonzero integer $\upsilon$.
If $\upsilon$ is negative, necessarily $\kappa_2 > \frac{q^m}{8t^2}$. If $\upsilon \geq 6$, we have that, again, $\kappa_2 > \frac{q^m}{8t^2}$. Let us therefore assume that $1 \leq \upsilon \leq 5$.
Now (\ref{flounder}) is
$$
\begin{array}{c}
256 t^{10} N q^{n-5m} =  \kappa_2^2 q^{3m} + 4 t^4 \upsilon +  (-1)^\delta \left(16 \kappa_2 t^4 M   \right. \\
\left. -4 \kappa_2 t^2 M^2 q^{m} + 2 \kappa_2 M^3 q^{2m} \right) -4t^2M^5+M^6 q^{m}\\
\end{array}
$$
and so, since $n \geq 6m$, 
$$
t^2 \upsilon +  (-1)^\delta 4 \kappa_2 t^2 M - M^5 \equiv 0 \mod{q^m}.
$$
From (\ref{peach-1}), we therefore have
\begin{equation} \label{uppy}
5 \upsilon +  (-1)^\delta 28 \kappa_2 M  \equiv 0 \mod{q^m}.
\end{equation}
Since $1 \leq \upsilon \leq 5$, the left hand side here is nonzero and so 
$$
28 \kappa_2 |M| \geq q^m-25.
$$
For $q^m \geq 375$, it follows immediately
that
\begin{equation} \label{kap2}
\kappa_2 > \frac{q^{m-1}}{30},
\end{equation}
whilst the inequality if trivial if $q=3$ and $m=4$. If $q=3$ and $m=5$, we check that for $|M| \in \{ 1, 2 \}$ and $1 \leq \upsilon \leq 5$, the smallest positive solution to the congruence (\ref{uppy}) has $\kappa_2 \geq 17$, whereby (\ref{kap2}) is again satisfied.

Combining this with (\ref{flounder}), we have that
\begin{equation} \label{uber}
256 t^{10} N q^{n-4m}  > \frac{1}{900} q^{6m-2} - \frac{1}{15} |M|^3 q^{4m-1} - \frac{2}{15} t^2 M^2 q^{3m-1} - \frac{8}{15} t^4 |M| q^{2m-1},
\end{equation}
whence
$$
q^{n-4m+6}  > \frac{1}{480^2} q^{6m-2}
\left( 1 - 60 q^{4-2m} - 120 q^{4-3m} - 480 q^{4-4m} \right).
$$
It follows that  
\begin{equation} \label{lower-bound}
n \geq 10m-10
\end{equation}
if $q \geq 23$.

We note that, combining (\ref{puppy}) and (\ref{uppy}), we have
\begin{equation} \label{booger}
2 \upsilon t^2 \equiv 7 M^5 \mod{ q^{m-\delta_5} },
\end{equation}
where $\delta_5 = 1$ if $q=5$ and $0$ otherwise. For $q=3$, we have $t = 1$, $M = \pm 1, \pm 2$, and find that
$\upsilon \equiv \pm 37 \mod{81}$ if $|M|=1$ and $\upsilon \equiv \pm 31 \mod{81}$ if $|M|=2$. In all cases, from (\ref{puppy}), we have
$$
\kappa_2 \geq \frac{1}{8} \left( 31 \cdot 3^m - 80 \right) > \frac{15}{4} 3^m.
$$
Together with (\ref{flounder}), we find, after a little work, that, again, $n \geq 10m-10$. If $q=5$, congruence (\ref{booger}) implies that $|\upsilon| \geq 13$, so that
(\ref{puppy}) yields, crudely,
$$
\kappa_2 \geq \frac{1}{32} \left( 13 \cdot 5^m - 1280 \right) > \frac{1}{3} 5^m,
$$
which again, with (\ref{flounder}), implies (\ref{lower-bound}). Arguing similarly for the remaining values of $q$ with  $7 \leq q \leq 19$, enables us to conclude that inequality (\ref{lower-bound}) holds for all $q \geq 3$ and $m \geq 4$. This concludes the proof of Lemma \ref{main-gap}.

\section{Pad\'e approximants to the binomial function}

We now consider Pad\'e approximants to $(1+x)^{1/2}$, defined,
for $n_1$ and $n_2$ nonnegative integers,  via
\begin{equation} \label{pee}
P_{n_1,n_2} (x) = \sum_{k=0}^{n_1} \binom{n_2 + 1/2}{k} \binom{n_1+n_2-k}{n_2} x^k
\end{equation}
and
\begin{equation} \label{queu}
Q_{n_1,n_2} (x) = \sum_{k=0}^{n_2} \binom{n_1 - 1/2}{k} \binom{n_1+n_2-k}{n_1} x^k.
\end{equation}
As in \cite{BB}, we find that
\begin{equation} \label{aye}
 P_{n_1,n_2} (x) - \left( 1+x \right)^{1/2} \; Q_{n_1,n_2} (x) = x^{n_1+n_2+1} \, E_{n_1,n_2} (x),
\end{equation}
where (see e.g. Beukers \cite{Beu81})
\begin{equation} \label{frump}
E_{n_1,n_2} (x) =   \frac{(-1)^{n_2} \, \Gamma (n_2+3/2)}{\Gamma(-n_1+1/2) \Gamma(n_1+n_2+1)}  F(n_1 + 1/2,n_1+1, n_1+n_2+2,-x),
\end{equation}
for $F$ the hypergeometric function given by
$$
F(a,b,c,-x) = 1 - \frac{a \cdot b}{1 \cdot c} x + \frac{a \cdot (a+1) \cdot b \cdot (b+1)}{1 \cdot 2 \cdot c \cdot (c+1)} x^2 - \cdots.
$$
Appealing twice to  (\ref{aye}) and  (\ref{frump}) and eliminating $(1+x)^{1/2}$, the quantity 
$$
P_{n_1+1,n_2}(x)Q_{n_1,n_2+1}(x)-P_{n_1,n_2+1}(x)Q_{n_1+1,n_2}(x)
$$
is a polynomial of degree $n_1+n_2+2$ with a zero at $x=0$ of order $n_1+n_2+2$ (and hence is a monomial). It follows that we may write
\begin{equation} \label{zero}
P_{n_1+1,n_2}(x)Q_{n_1,n_2+1}(x)-P_{n_1,n_2+1}(x)Q_{n_1+1,n_2}(x) = c x^{n_1+n_2+2}.
\end{equation}
Here, we have
$$
c = (-1)^{n_2+1} \frac{(2n_1-2n_2-1) \Gamma (n_2 + 3/2)}{2 (n_1+1)! \, (n_2+1)!  \,\Gamma (-n_1+1/2)} \neq 0.
$$
We further observe that
 $$
\binom{n+ \frac{1}{2}}{k} 4^k \in \mathbb{Z},
$$
so that, in particular, if $n_1 \geq n_2$, $4^{n_1}  P_{n_1,n_2} (x)$ and $4^{n_1}  Q_{n_1,n_2} (x)$ are polynomials with integer coefficients.

\subsection{Choosing $n_1$ and $n_2$}

For our purposes, optimal choices for $n_1$ and $n_2$ are as follows (we denote by $[x]$ the greatest integer not exceeding a real number $x$ and set $x=[x]+\{x\}$).
\begin{dfn} \label{yucca}
Define
$$
(n_1,n_2) = \left( \left[ \frac{3n}{4m} \right] + \delta - \Delta_1, \left[ \frac{n}{4m} \right] - \delta + \Delta_2 \right)
$$
where $\delta \in \{ 0, 1 \}$,
$$
\Delta_1 =
\left\{
\begin{array}{cl} 
1 & \mbox{ if } \left\{ \frac{n}{4m} \right\}  \in [0,1/4] \cup [1/3,1/2] \cup [2/3,3/4] \\
0 & \mbox{ if } \left\{ \frac{n}{4m} \right\}  \in (1/4,1/3) \cup (1/2,2/3) \cup (3/4,1), \\
\end{array}
\right.
$$
and
$$
\Delta_2 =
\left\{
\begin{array}{cl} 
1 & \mbox{ if } \left\{ \frac{n}{4m} \right\} >0 \\
0 & \mbox{ if } \left\{ \frac{n}{4m} \right\} =0. \\
\end{array}
\right.
$$
\end{dfn}

Note that for these choices of $n_1$ and $n_2$, we may check that
$$
(n_1+n_2+1) m = n + \left( \Delta_2- \Delta_1 + 1 -  \left\{ \frac{n}{4m} \right\} - \left\{ \frac{3n}{4m} \right\} \right) m  \geq n.
$$
Further, we have
$$
n_1 (m+1) = \frac{3n}{4} + \frac{3n}{4m} + \kappa_1 (m,n,\delta)
$$
and
$$
n_2 (m+1) + n_1-n_2 + \frac{n}{2} = \frac{3n}{4}  + \frac{3n}{4m} + \kappa_2 (m,n,\delta),
$$
where
$$
\kappa_1 (m,n,\delta) = 
(m+1) \left( \left[ 3 \left\{ \frac{n}{4m} \right\} \right] + \delta - \Delta_1- 3  \left\{ \frac{n}{4m} \right\}  \right)
$$
and
$$
\kappa_2 (m,n,\delta) = - (m+3) \left\{ \frac{n}{4m} \right\} + \left[ 3 \left\{ \frac{n}{4m} \right\} \right] + ( \Delta_2-\delta) m + \delta - \Delta_1.
$$
A short calculation ensures that, in every situation, we have
\begin{equation} \label{fabulous}
 \max \{ n_1 (m+1),  n_2 (m+1) + n_1-n_2 + n/2 \}
 \leq \frac{3n}{4} + \frac{3n}{4m} + m- \frac{5}{4},
\end{equation}
where the right-hand-side is within $O(1/m)$ of the ``truth'' for $\delta=0$, $\Delta_1=\Delta_2=1$.

Note that the fact that $n \geq 10m-10$ implies that we have $n_2 \geq 2$, unless
$$
(m,n) \in \{ (4,30), (4,31), (4,32), (5,40) \},
$$
where we might possibly have $n_2=1$. In all cases, we also have
\begin{equation} \label{frozen}
\left| n_1 - 3 n_2 \right| \leq 3.
\end{equation}

\subsection{Bounds for $|P_{n_1,n_2} (x) |$ and $|Q_{n_1,n_2} (x) |$.}

We will have need of the following result.

\begin{lemma} \label{lemma1}
If $n_1$ and $n_2$ are as given in Definition \ref{yucca}, where $m \geq 4$ and $n \geq 10m-10$ are integers, then we have
$$
|P_{n_1,n_2} (x)| \leq  2 \, |x|^{n_1}
\; \; \mbox{ and } \; \; 
|Q_{n_1,n_2} (x)| \leq 2^{n_1+n_2-1} \left( 1 + \frac{|x|}{2} \right)^{n_2},
$$
for all real numbers $x$ with $|x| \geq 16$.
\end{lemma}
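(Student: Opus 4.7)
The plan is to bound each polynomial by the sum of the absolute values of its coefficients (times the corresponding powers of $|x|$) and to control this sum using elementary binomial identities, together with the hypotheses $|x|\ge 16$ and the constraint $|n_1-3n_2|\le 3$ from (\ref{frozen}). For $Q_{n_1,n_2}$, all coefficients are positive since $0\le k\le n_2<n_1$; combining the trivial $\binom{n_1-1/2}{k}\le\binom{n_1}{k}$ with the symmetry identity
$$\binom{n_1}{k}\binom{n_1+n_2-k}{n_1}=\binom{n_2}{k}\binom{n_1+n_2-k}{n_2}$$
and the standard inequality $\binom{N}{j}\le 2^{N-1}$ (for $N\ge 1$) applied to $\binom{n_1+n_2-k}{n_2}$, one obtains
$$|Q_{n_1,n_2}(x)|\le\sum_{k=0}^{n_2}\binom{n_2}{k}2^{n_1+n_2-k-1}|x|^k=2^{n_1+n_2-1}\left(1+\tfrac{|x|}{2}\right)^{n_2},$$
as required. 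This half of the lemma does not actually use $|x|\ge 16$.

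The bound on $P_{n_1,n_2}$ is more subtle, since $\binom{n_2+1/2}{k}$ changes sign for $k>n_2+1$. The two key pointwise estimates are $|\binom{n_2+1/2}{k}|\le\binom{n_2+1}{k}$ for $0\le k\le n_2+1$ (which is immediate), and
$$\left|\binom{n_2+1/2}{k}\right|\le\frac{1}{\binom{k}{n_2+1}}\qquad\text{for } k\ge n_2+1.$$
The second follows from a short $\Gamma$-function computation using $\Gamma(\tfrac{1}{2}-j)=(-4)^j j!\sqrt{\pi}/(2j)!$ together with $\binom{2j}{j}\le 4^j$: setting $k=n_2+1+j$ one finds $|\binom{n_2+1/2}{k}|=\binom{2n_2+2}{n_2+1}\binom{2j}{j}/(4^k\binom{k}{n_2+1})$, and the claim follows. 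In particular the leading coefficient $|a_{n_1}|=|\binom{n_2+1/2}{n_1}|\le 1/\binom{n_1}{n_2+1}\le 1$ contributes at most $|x|^{n_1}$, so it remains to show that $\sum_{k<n_1}|a_k||x|^k\le |x|^{n_1}$, where $a_k=\binom{n_2+1/2}{k}\binom{n_1+n_2-k}{n_2}$.

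I would split this residual sum at $k=n_2+1$. For $k\le n_2$ the product $\binom{n_2+1}{k}\binom{n_1+n_2-k}{n_2}$ is bounded via $\binom{n_1+n_2-k}{n_2}\le 2^{n_1+n_2-k-1}$, and the resulting terms form an essentially geometric sequence in $k$ dominated by the $k=n_2$ term; invoking $|x|\ge 16$ and $n_1-n_2\ge 2n_2-3$, this contribution is small. For $k\ge n_2+1$ the ratio $\binom{n_1+n_2-k}{n_2}/\binom{k}{n_2+1}$ is monotonically decreasing in $k$ (by direct inspection of the ratio of consecutive terms: it equals $\frac{(n_1-k)(k-n_2)}{(n_1+n_2-k)(k+1)}<1$), so the upper sum collapses into a geometric series in $|x|^{-1}$ of ratio at most $1/16$. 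The main obstacle is to track constants carefully enough to yield the factor $2$ rather than some larger absolute constant; the tightest configurations (for instance $(n_1,n_2)=(3,2)$, corresponding to the smallest cases permitted by (\ref{frozen})) can be verified by direct computation.
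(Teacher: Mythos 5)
Your bound for $Q_{n_1,n_2}$ is identical to the paper's: the same appeal to $\binom{n_1-1/2}{k}\le\binom{n_1}{k}$, the same symmetry identity, and the same $\binom{N}{j}\le 2^{N-1}$ step. That half is fine.

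For $P_{n_1,n_2}$, your coefficient estimates are correct and coincide with the paper's: $|\binom{n_2+1/2}{k}|\le\binom{n_2+1}{k}$ for $k\le n_2+1$ and $|\binom{n_2+1/2}{k}|\le 1/\binom{k}{n_2+1}=(n_2+1)!(k-n_2-1)!/k!$ for $k\ge n_2+1$. The gap is in the claim that the tail $\sum_{k\ge n_2+1} b_k|x|^k$, with $b_k=\binom{n_1+n_2-k}{n_2}/\binom{k}{n_2+1}$, ``collapses into a geometric series in $|x|^{-1}$ of ratio at most $1/16$.'' Monotonicity of $b_k$ does not give this. The ratio of consecutive terms is $(b_{k+1}/b_k)|x| = \frac{(n_1-k)(k-n_2)}{(n_1+n_2-k)(k+1)}|x|$, and near $k=n_2+1$ (and also near $k=n_1$) the fraction is of size $O(1/n_2)$; for $n_2$ large this makes $(b_{k+1}/b_k)|x|<1$, so the sequence $b_k|x|^k$ is not increasing, let alone by a factor $\ge 16$ at each step. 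Worse, $b_{n_2+1}=\binom{n_1-1}{n_2}$ is exponentially large (roughly $(27/4)^{n_2}$ when $n_1\approx 3n_2$), so the naive estimate $\sum b_k|x|^k\le b_{n_2+1}\sum|x|^k$ is hopeless. Your conclusion is actually true — $b_{n_2+1}|x|^{n_2+1}/(b_{n_1}|x|^{n_1})\asymp(27/64)^{2n_2}/n_2\to 0$ because $16^2>27/4$ — but the stated mechanism does not establish it, and you would need to carry out that comparison honestly.

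The paper sidesteps this by splitting the tail not at $k=n_2+1$ but at $k=\lfloor(n_1+n_2)/2\rfloor$. For $k$ above the midpoint one has $n_1+n_2-k<k$, hence $\binom{n_1+n_2-k}{n_2}<\binom{k}{n_2}$ and $b_k<\frac{n_2+1}{k-n_2}\le 1$ (the last inequality using $2n_2\le\lfloor(n_1+n_2)/2\rfloor+2$, which follows from $n_1\ge 3n_2-3$); this immediately gives the clean geometric bound $\sum_k b_k|x|^k<\frac{|x|}{|x|-1}|x|^{n_1}$. Below the midpoint a cruder bound $\binom{n_1+n_2-k}{n_2}\le 2^{n_1+n_2-k-1}$ yields a sum bounded by $O(|2x|^{(n_1+n_2)/2})$, which is negligible against $|x|^{n_1}$ once $n_1\ge 3n_2-3$ and $|x|\ge 16$. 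I'd recommend adopting this second split (or at minimum replacing the ``ratio $\le 1/16$'' claim with a direct comparison of each $b_k|x|^k$ to $b_{n_1}|x|^{n_1}$, keeping track of the $(27/4)^{2n_2}$ versus $16^{2n_2}$ competition).
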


\begin{proof}
Arguing as in the proof of Lemma 1 of Beukers \cite{Beu}, we have that
$$
|Q_{n_1,n_2} (x)| \leq  \sum_{k=0}^{n_2} \binom{n_1}{k} \binom{n_1+n_2-k}{n_1} |x|^k =  \sum_{k=0}^{n_2} \binom{n_2}{k} \binom{n_1+n_2-k}{n_2} |x|^k.
$$
Since $n_1 > n_2$ and  $\binom{n_1+n_2-k}{n_2} \leq 2^{n_1+n_2-k-1}$, it follows that
$$
|Q_{n_1,n_2} (x)| \leq 2^{n_1+n_2-1} \left( 1 + \frac{|x|}{2} \right)^{n_2}.
$$

Next, note that, since $n_1 > n_2$, $|P_{n_1,n_2} (x)|$ is bounded above by 
$$
\sum_{k=0}^{n_2+1} \binom{n_2+1}{k} \binom{n_1+n_2-k}{n_2} |x|^k + \sum_{k=n_2+2}^{n_1} \frac{(n_2+1)! (k-n_2-1)!}{k!} \binom{n_1+n_2-k}{n_2} |x|^k.
$$
The first sum here is, arguing as previously,  at most
$$
2^{n_1+n_2-1} \left( 1 + \frac{|x|}{2} \right)^{n_2+1}.
$$
For the second, we split the summation into the ranges $n_2+2 \leq k \leq \left[ \frac{n_1+n_2}{2} \right]$ and $\left[ \frac{n_1+n_2}{2} \right]+1 \leq k \leq n_1$. In the second of these, we have $n_1+n_2-k < k$ and so
$$
\binom{n_1+n_2-k}{n_2} < \binom{k}{n_2},
$$
whence
$$
\sum_{k=\left[ \frac{n_1+n_2}{2} \right]+1}^{n_1} \frac{(n_2+1)! (k-n_2-1)!}{k!} \binom{n_1+n_2-k}{n_2} |x|^k
< \sum_{k=\left[ \frac{n_1+n_2}{2} \right]+1}^{n_1} \frac{n_2+1}{k-n_2} \;  |x|^k.
$$
Appealing to Definition \ref{yucca}, we may show that $2n_2 \leq \left[ \frac{n_1+n_2}{2} \right]+2$
and hence $\frac{n_2+1}{k-n_2} \leq 1$, so that
$$
\sum_{k=\left[ \frac{n_1+n_2}{2} \right]+1}^{n_1} \frac{n_2+1}{k-n_2} \;  |x|^k \leq
\sum_{k=\left[ \frac{n_1+n_2}{2} \right]+1}^{n_1}   |x|^k <
\frac{|x|}{|x|-1} \; |x|^{n_1},
$$
provided $|x| > 1$.
Since
$$
\sum_{k=n_2+2}^{\left[ \frac{n_1+n_2}{2} \right]} \frac{(n_2+1)! (k-n_2-1)!}{k!} \binom{n_1+n_2-k}{n_2} |x|^k
<
\sum_{k=n_2+2}^{\left[ \frac{n_1+n_2}{2} \right]} \binom{n_1+n_2-k}{n_2} |x|^k
$$
and
$$
\sum_{k=n_2+2}^{\left[ \frac{n_1+n_2}{2} \right]} \binom{n_1+n_2-k}{n_2} |x|^k
\leq 
\sum_{k=n_2+2}^{\left[ \frac{n_1+n_2}{2} \right]}2^{n_1+n_2-k-1} \, |x|^k
< \sum_{k=n_2+2}^{\left[ \frac{n_1+n_2}{2} \right]}  |2x|^k,
$$
we may conclude that $|P_{n_1,n_2} (x)|$ is bounded above by
$$
2^{n_1+n_2-1} \left( 1 + \frac{|x|}{2} \right)^{n_2+1} + 
\frac{|x|}{|x|-1} \; |x|^{n_1} + \frac{|2x|}{|2x|-1} \; |2x|^{ \frac{n_1+n_2}{2}}.
$$
Since $|x| \geq 16$ and, via (\ref{frozen}), $n_1 \geq 3n_2-3$, checking values with $n_2 \leq 10$ separately, we may conclude that
$$
|P_{n_1,n_2} (x)| < 2 \, |x|^{n_1}.
$$
This concludes our proof.
\end{proof}

\section{Proof of Theorem \ref{main-theorem}}  \label{guppy}

To prove Theorem \ref{main-theorem}, we will, through the explicit Pad\'e approximants of the preceding section, construct an integer that is nonzero and, in archimedean absolute value ``not too big'', while, under the assumptions of the theorem, being divisible by a very large power of our prime $q$. With care, this will lead to the desired contradiction.

Setting
$\eta = \sqrt{t^2+M q^m}$,
since $(1+x)^{1/2}$, $P_{n_1,n_2} (x)$ and $Q_{n_1,n_2} (x)$ have $q$-adic integral coefficients, the same is also true of $E_{n_1,n_2} (x)$ and so, via equation (\ref{aye}),
$$
\left|  t P_{n_1,n_2} \left( \frac{M q^m}{t^2} \right) - \eta \, Q_{n_1,n_2} \left( \frac{M q^m}{t^2} \right) \right|_q \leq q^{-n}.
$$
On the other hand, from the fact that 
$\eta^2 \equiv Y^2 \mod{q^n}$,
we have
$$
\eta \equiv (-1)^{\delta_1} Y \mod{q^n},
$$
for some $\delta_1 \in \{ 0, 1 \}$, and hence
$$
\left| t P_{n_1,n_2} \left( \frac{M q^m}{t^2} \right) - (-1)^{\delta_1} Y  \,  Q_{n_1,n_2} \left( \frac{M q^m}{t^2} \right) \right|_q \leq q^{-n}.
$$
Equation (\ref{zero}) implies that for at least one of our two pairs $(n_1,n_2)$, we must have
$$
t P_{n_1,n_2}  \left( \frac{M q^m}{t^2} \right)  \neq (-1)^{\delta_1} Y  \,  Q_{n_1,n_2}  \left( \frac{M q^m}{t^2} \right) 
$$
and hence, for the corresponding pair $(n_1,n_2)$, we have that 
$$
 (2t)^{2n_1}  \, P_{n_1,n_2}  \left( \frac{M q^m}{t^2} \right)  - (-1)^{\delta_1} Y  \, 2^{2n_1} \, t^{2n_1-1} \,  Q_{n_1,n_2}  \left( \frac{M q^m}{t^2} \right)
 $$
 is a nonzero integer, divisible by $q^n$, and so, in particular, 
\begin{equation} \label{lower}
\left|  (2t)^{2n_1}  \, P_{n_1,n_2}  \left( \frac{M q^m}{t^2} \right)  - (-1)^{\delta_1} Y  \, 2^{2n_1} \, t^{2n_1-1} \,  Q_{n_1,n_2}  \left( \frac{M q^m}{t^2} \right)  \right| \geq q^{n}.
\end{equation}
From Lemma \ref{lemma1} and  the fact that $Y < q^{(n+1)/2}$, we thus have
\begin{equation} \label{moose}
q^n \leq 2^{2n_1+1}  |M|^{n_1} q^{m n_1} + 2^{3n_1+n_2-1} q^{(n+1)/2} t^{2n_1-1} \left( 1 + \frac{|M|q^m}{2t^2} \right)^{n_2}.
\end{equation}
From the  inequalities
$$
|M|, t^2 < q \; \mbox{ and } \; \frac{|M|q^m}{2t^2} \geq \frac{81}{2},
$$
it follows from (\ref{moose})  that
\begin{equation} \label{real}
q^n \leq 2^{2n_1+1}  \cdot q^{(m+1) n_1} + 2^{3n_1-1} q^{n/2 + (m+1) n_2 + n_1-n_2} \, (83/81)^{n_2},
\end{equation}
and hence, since $n \geq 10m-10$ and $m \geq 4$, we may argue rather crudely to conclude that
\begin{equation} \label{bosnia}
q^n < 9^{n_1}  \cdot q^{\max \{ n_1 (m+1),  n_2 (m+1) + n_1-n_2 + n/2 \}} .
\end{equation}
Inequality (\ref{fabulous}) thus implies 
$$
q^n < 9^{\frac{3n}{4m} +1} \cdot q^{\frac{3n}{4} + \frac{3n}{4m} + m- \frac{5}{4}},
$$
whence
\begin{equation} \label{loop}
q^{1 - \frac{3}{m} - \frac{4m}{n} + \frac{5}{n}} < 9^{\frac{3}{m} + \frac{4}{n}}.
\end{equation}
Since $m \geq 4$, if $n$ is suitably large, this provides an upper bound upon $q$.
In particular, if 
\begin{equation} \label{nimh}
n > \frac{4m^2-5m}{m-3},
\end{equation}
then
\begin{equation} \label{rats}
q < 3^{\frac{6n+8m}{mn-3n-4m^2+5m}}.
\end{equation}
Since $m \geq 4$ and $n \geq 10m-10$, (\ref{nimh}) is satisfied unless we have $m=4$ and $30 \leq n \leq 44$.
Excluding these values for the moment, we thus have
$$
q < 3^{\frac{68m-60}{6m^2-35m+30}}.
$$
Since $q \geq 3$, it follows, therefore, that, in all cases, $m \leq 16$. If $q \geq 5$, we have the sharper inequality $m \leq 12$.

\subsection{Small values of $m$}

To treat the remaining values of $m$, we argue somewhat more carefully. For fixed $q$ and $m$, equation (\ref{three-digit2}) under the conditions in  (\ref{conditions}) can, in many cases, be shown to have no solutions via simple local arguments. In certain cases, however, when the tuple $(t,M,N,m)$ matches up with an actual solution, we will not be able to find such local obstructions. For example, the identities 
$$
\left(   q^m \cdot Y_0 \pm t \right)^2 = t^2  \pm 2 t Y_0 q^m + Y_0^2 q^{2m}
$$
imply that we cannot hope, through simple congruential arguments, to eliminate the cases (here $n \equiv n_0 \mod{3}$)
\begin{equation} \label{case1}
(t,M,N,n_0) = (t,\pm 2 t Y_0, Y_0^2, 2m \mod{3}),
\end{equation}
where $\max \{ t^2, Y_0^2, 2 t Y_0 \} < q$. For even values of $m$, we are also unable to summarily dismiss tuples like
\begin{equation} \label{case2}
(t,M,N,n_0) = (t,Y_0^2, 2 t Y_0, m/2 \mod{3}).
\end{equation}
Additionally, the ``trivial'' identity
$$
t^2 = t^2 - M \cdot q^m + M \cdot q^m
$$
leaves us with the necessity of treating tuples
\begin{equation} \label{case3}
(t,M,N,n_0) = (t,-N,N, m \mod{3})
\end{equation}
via other arguments. By way of example, if $q=m=5$, sieving by primes $p$ with the property that the smallest positive $t$ with $5^t \equiv 1 \mod{p}$ divides 
$300$, we find that all tuples $(t,M,N,n_0)$ are eliminated except for
$$
\begin{array} {c}
(1,-2,1,1), (1,-1,1,2), (1,2,1,1), (1,-2,2,2), (1,1,2,1), (1,-3,3,2), \\
(1,-4,4,1), (1,-4,4,2), (1,4,4,1), (2,-4,1,1), (2,-1,1,2), (2,4,1,1), \\
(2,-2,2,2), (2,-3,3,2) \; \mbox{ and } \;  (2,-4,4,2).\\
 \end{array}
$$
These all correspond to (\ref{case1}) or (\ref{case3}), except for $(t,M,N,n_0)=(1,1,2,1)$ which arises from the identity $56^2 = 1^2 + 2 \cdot 5 + 5^5$.

For the cases where we fail to obtain a local obstruction, we can instead consider  equations (\ref{uwe}), with the conditions  (\ref{ewe2}). Our expectation is that, instead of needing to treat roughly $6 (q-1)^{5/2}$ such equations (for a fixed pair $(q,m)$), after local sieving we will be left with on the order of $O(q)$ Mordell curves to handle.

By way of example, let us begin with the case where $q=3$.
Here, from (\ref{moose}),
$$
3^n \leq 2^{3n_1+1} 3^{m n_1} + 2^{3n_1+n_2-1}  (82/81)^{n_2} 3^{mn_2 + (n+1)/2}.
$$
Since $\max \{ m n_1, m n_2 + (n+1)/2 \} \leq \frac{3n}{4}+m + \frac{1}{4}$, and $n_2 \geq 2$ (provided $n > 40$), we thus have
$$
3^n \leq 2^{3n_1+n_2}  (82/81)^{n_2} 3^{\frac{3n}{4}+m + \frac{1}{4}},
$$
so that
$$
3^{n/4-m-1/4} \leq 2^{3n_1+n_2}  (82/81)^{n_2}.
$$
We check that $n_2 \leq \frac{n}{4m}+1$ and $3n_1+n_2 \leq \frac{5n}{2m} + \frac{3}{2}$, whence
either $n \leq 40$, or we have
$$
3^{\frac{n}{4}-m-\frac{1}{4}} \leq 2^{\frac{5n}{2m} + \frac{3}{2}}  (82/81)^{\frac{n}{4m}+1}.
$$
In this latter case, if $m \geq 12$, the fact that $n \geq 10m-10$ leads to a contradiction, whilst, for $8 \leq m \leq 11$, we have that $n \leq 157$. A short calculation ensures that there are no solutions to equation (\ref{three-digit2}) with (\ref{conditions}), if $q=3$, $8 \leq m \leq 11$ and $10m-10 \leq n \leq 157$.
For $q=3$ and $4 \leq m \leq 7$, we are led to equation of the shape (\ref{uwe}), where now  $|k| \leq 324 \left( 1 + 2  \cdot 3^m \right) \leq 1417500$. As noted previously, the integer points on the corresponding Mordell curves are known (see \cite{BeGh}) and listed at
\url{http://www.math.ubc.ca/~bennett/BeGa-data.html}.
We check that no solutions exist with $U$ and $V$ as in (\ref{ewe2}).

We may thus suppose that $q \geq 5$ and hence it remains to treat the values of $m$ with $4 \leq m \leq 12$.
If $m=12$, appealing to (\ref{rats}), we have, from the fact that $n \geq 110$, necessarily $110 \leq n \leq 118$ and $q=5$. A short calculation ensures that there are no corresponding solutions to equation (\ref{three-digit2}) with (\ref{conditions}). Similarly, if $m=11$, we have that either $q=5$ and $100 \leq n \leq 125$, or $q=7$, $100 \leq n \leq 103$. If $m=10$, $q=5$ and $90 \leq n \leq 139$, or $q=7$ and $90 \leq n \leq 109$, or $q=11$ and $n=90$. For $m=9$ we have, in all cases, $n \leq 172$ and $q \leq 19$. For $m=8$, $n \leq 287$ and $q \leq 47$. A modest computation confirms that we have no new solutions to the equation of interest and hence we may suppose that $4 \leq m \leq 7$ (and that $q \geq 5$). 

For small values of $q$, each choice of $m$ leads to at most $2 q^{5/2}$ Ramanujan-Nagell equations (\ref{RamNag}) which we can solve as in \cite{PW}. In practice, the great majority of these are eliminated by local sieving. By way of example, if $q=5$, after local sieving, we are left to treat precisely $32$ pairs $(D,N)$  in equation (\ref{RamNag}), corresponding to
$$
\begin{array}{c}
D \in  \left\{ -312498, -15624, -15623, -12498, -2498, -1249,  \right.\\
\hskip8ex \left. -624, 1251, 2502,  6251, 12502, 31251, 312502 \right\}, \; \mbox{ if } \; N=1, \\
\end{array}
$$
$$
D \in \{ -156248, -31248, 3126, 15626 \}, \; \mbox{ if } \; N=2, 
$$
$$
D \in \{ -234374, -234373,  -46874, -46873, -1873, 31252 \}, \; \mbox{ if } \; N=3
$$
and
$$
D \in \{ -312499, -62498, -2499, -2498, 627, 2501, 12501, 15627, 62501\} \; \mbox{ if } \; N=4.
$$
For these values of $(D,N)$, we find that equation (\ref{RamNag}) has precisely solutions as follows
$$
\begin{array}{ccc|ccc} \hline
D & N & n & D & N & n \\ \hline
-312499 & 4 &  7 & 2501 & 4 & 2 \\
-312499 & 4 & 14 & 2501 & 4 & 8 \\
-234374 &  3 & 7 & 3126 & 2 & 1 \\
-46874 & 3 & 6 & 6251 & 1 & 10 \\
-15624 & 1 & 6 & 12501 & 4 & 10 \\
-2499 & 4 & 4 & 15626 & 2 & 3 \\
-2499 & 4 & 8 & 31251 & 1 & 12 \\
-1249 & 1 & 8 & 62501 & 4 & 3 \\
-624 & 1 & 4 & 62501 & 4 & 12 \\
1251 & 1 & 8 & & & \\ \hline
\end{array}
$$
\vskip2ex
\noindent In all cases, these solutions correspond to values of $m$ that have either $m \geq n$ or $n=2m$. More generally, implementing a ``Ramanujan-Nagell'' solver as in \cite{PW}, in conjunction with local sieving, we completely solve equation  (\ref{three-digit2}) with (\ref{conditions}), for $m \in \{ 4, 5, 6, 7 \}$ and $5 \leq q \leq 31$.
No new solutions accrue. If we appeal again to inequality (\ref{rats}), using that $q \geq 37$, we find that $60 \leq n \leq 81$ (if $m=7$), $50 \leq n \leq 109$ (if $m=6$) and $40 \leq n \leq 499$ (if $m=5$). After a short computation, we are left to consider the cases with $m=4$ and $q \geq 37$.

For the value $m=4$, proceeding in this manner would entail an extremely  large computation, without additional ingredients. 
By way of example, in case $m=4$ and $n=45$, inequality (\ref{rats}) implies an upper bound upon $q$ that exceeds $10^{144}$ (and no upper bound whatsoever for $30 \leq n \leq 44$). To sharpen this and related inequalities, we will argue as follows. Notice that if we have 
\begin{equation} \label{marvel}
t P_{n_1,n_2}  \left( \frac{M q^m}{t^2} \right) = (-1)^{\delta_1} Y  \,  Q_{n_1,n_2}  \left( \frac{M q^m}{t^2} \right),
\end{equation}
then
$$
t^2 P_{n_1,n_2}^2\left( \frac{M q^m}{t^2} \right) - (t^2 + M q^m + N q^n) Q_{n_1,n_2}^2   \left( \frac{M q^m}{t^2} \right) =0.
$$
From our construction, it follows that 
$$
 \left| t^2 P_{n_1,n_2}^2\left( \frac{M q^m}{t^2} \right) - (t^2 + M q^m) Q_{n_1,n_2}^2   \left( \frac{M q^m}{t^2} \right) \right|_q \leq q^{-m (n_1+n_2+1)}.
$$
and hence, if $(n_1+n_2+1)m > n$ and (\ref{marvel}), then 
\begin{equation} \label{goof}
q^{(n_1+n_2+1)m - n} \; \mbox{ divides } \; Q_{n_1,n_2}^2 (0) = \binom{n_1+n_2}{n_2}^2.
\end{equation}
In particular, if $m=4$ and  $30 \leq n \leq 32$, then we have $(n_1,n_2) \in \{ (5,2), (6,1) \}$ and hence, since $q \geq 37$, (\ref{goof}) fails to hold. We thus obtain inequality (\ref{lower})  for both pairs $(n_1,n_2)$, rather than just for one of them, provided $n \in \{ 30, 31 \}$ (if $n=32$, we have $(n_1+n_2+1)m=n$). Choosing $(n_1,n_2)=(5,2)$, it follows from (\ref{bosnia}) that, if $n=30$, we have $q^{2} < 3^{10}$, so that $q \leq 241$, while $n=31$ implies $q^{5/2} < 3^{10}$, i.e. $q \leq 79$. If $n=32$, the worse case corresponds to $(n_1,n_2)=(6,1)$, where we find, again from (\ref{bosnia}), that $q^2 < 3^{12}$ and so $q \leq 727$. 
Continuing in this fashion, observing that the greatest prime factor $\binom{n_1+n_2}{n_2}$ is bounded above by roughly $n/4$, and that $4(n_1+n_2+1)=n$ precisely when $4 \mid n$, we have, via (\ref{bosnia}), an upper bound upon $q$ of the shape $q < \min_{\delta \in \{ 0, 1 \}} \{ 3^{2n_1/(n-\mu)} \}$, if $4 \not\;\mid n$, and $q < \max_{\delta \in \{ 0, 1 \}} \{ 3^{2n_1/(n-\mu)} \}$, if $4 \mid n$, where 
$$
\mu = \max \{ n_1 (m+1),  n_2 (m+1) + n_1-n_2 + n/2 \} .
$$
Here, we exclude the cases where $\mu \geq n$, corresponding to $(n_1,n_2)=(5,3)$ if $n=33$ or $34$ and $(n_1,n_2)=(9,2)$ if $n=45$; in each of these, the other choice of $(n_1,n_2)$ leads to a bound upon $q$. For $n \leq 1000$, we find that $q < 3^{10}$, in case $n=36$, $q < 3^{28/3}$ (if $q=41$), $q < 3^8$ (if $n=52$ or $n=57$) and otherwise $q < 3155$. A painful but straightforward computation finds that we have no additional solutions to equation  (\ref{three-digit2}) with (\ref{conditions}) for $n \leq 1000$. Applying once again inequality (\ref{rats}), we may thus assume that $q \leq 1021$. After local sieving and solving corresponding equations of the shape (\ref{RamNag}),  we verify that  equation  (\ref{three-digit2}) has no unexpected solutions with (\ref{conditions}), for $m =4$ and $37 \leq q \leq 1021$. This completes the proof of Theorem \ref{main-theorem}.

Full details of our computations are available from the authors upon request.

\section{Proof of Theorem \ref{Cor1}}

For $q \in \{ 3, 5 \}$, we may apply Theorem \ref{main-theorem} to conclude that either $n=3^b+1$ (in case $q=3$) or that $n \in \{ 5^b+1, 2 \cdot 5^b+1, 5^b+2 \}$ (if $q=5$),  for some positive integer $b$, or that we have either
\begin{equation} \label{last}
n^2 = 1 + M \cdot 3^m + N \cdot 3^n, \; n^2 = 1 + M \cdot 5^m + N \cdot 5^n \; \mbox{ or } \; n^2 = 4 + M \cdot 5^m + N \cdot 5^n,
\end{equation}
with $m \in \{ 1, 2, 3 \}$, $n > m$ and $1 \leq M, N \leq q-1$. Checking the corresponding solutions to (\ref{uwe}) (all available at \url{http://www.math.ubc.ca/~bennett/BeGa-data.html}), we find that the only solutions to (\ref{last}) are with 
$$
n \in \{ 4, 5, 8, 9, 12, 13, 16,  23, 24, 56, 177 \}, 
$$
as claimed. Adding in the ``trivial'' solutions with $n \in \{ 1, 2 \}$, completes the proof of Theorem \ref{Cor1} in case $q \in \{ 3, 5 \}$.

Our argument for $q \in \{ 2, 4, 8, 16 \}$ follows along very similar lines to the proof of Theorem \ref{main-theorem}, only with slight additional complications, arising from the fact that none of  $(1+x)^{1/2}$, $P_{n_1,n_2} (x)$ or $Q_{n_1,n_2} (x)$ have $2$-adic integral coefficients. On the other hand,  $(1+4x)^{1/2}$, $P_{n_1,n_2} (4x)$ and $Q_{n_1,n_2} (4x)$ do have $2$-adic integral coefficients and so we can proceed as in Section \ref{guppy}, taking $x = M q^m/t^2$, where now $q=2^\alpha$ for $\alpha \in \{ 1, 2, 3, 4 \}$. Under mild assumptions upon $m$ ($m \geq 5$ is satisfactory), the arguments of Sections \ref{gaps} and \ref{guppy} go through with essentially no changes. We are left to treat a number of equations of the shape (\ref{RamNag}), to complete the proof of Theorem \ref{Cor1}. We suppress the details.

\section{Concluding remarks}

In this paper, we have focussed our attention on equation (\ref{wilbur}) in case $C$ is square and $q$ is prime. Even in this very restricted situation, we have been able to use our results to completely determine $B_3(q)$ only for $q \in \{ 2, 3, 5 \}$. We conclude with some speculations upon the structure of the sets $B_3(q)$.
Let us write 
$$
B_k(q) = \bigcup_{j=k}^\infty B_{k,j}(q),
$$
where
$$
B_{k,j} (q) = \left\{ n \in \mathbb{N} \; : \; n \not\equiv 0 \mod{q}, \; N_q(n)=j \; \mbox{ and } \; N_q(n^2)=k \right\}.
$$
If $q=r^2+1$ is prime for $r$ an integer, since we have
$$
\frac{1}{2} r ( r^6+5r^4+7r^2+5) = r + r \cdot q^2 + \frac{r}{2} \cdot q^3,
$$
 identity (\ref{huge}) implies that $B_{3,3} (q)$ is nonempty  for such $q$. Further, for odd prime $q$, we can find examples to verify that
 $B_{3,4}(q)$ is  nonempty for (at least)
 $$
 q = 7, 11, 17, 23, 31, 47, 101, 131, 151,
 $$
amongst the primes up to $200$.
We observe that
$$
35864 \in B_{3,5}(11).
$$
We know of no other value in $B_{3,j}(q)$ for $j \geq 5$ and $q$ prime. Perhaps there are none.

\section{Acknowledgments}

The authors are grateful to the referees for pointing out a number of errors, typographical and otherwise.


\end{document}